\documentclass[12pt]{amsart}

\usepackage{amscd}
\usepackage{verbatim}

\topmargin0in

\textheight8.5in

\oddsidemargin0.2in

\evensidemargin0.2in

\textwidth6in

\advance\hoffset by -0.5 truecm

\usepackage{amssymb}

\newtheorem{Theorem}{Theorem}[section]

\advance\hoffset by -0.5 truecm

\usepackage{amssymb}
\usepackage{graphicx}
\usepackage{subfigure}

\newtheorem{Proposition}[Theorem]{Proposition}

\def\V{\mbox{Var}}

\def\R\re

\def\V{\bf V}

\def\S{\bf{S}}

\def \re{{\mathbb R}}

\def \0{\lambda_{0}}

\begin{document}
\hyphenation{Ya-ma-be par-ti-cu-lar}
\title{Multiplicity of constant scalar curvature metrics on  warped products}

\author[J. M. Ruiz]{Juan Miguel Ruiz}
 \address{ENES UNAM \\
           37684 \\
          Le\'on. Gto. \\
          M\'exico.}
\email{mruiz@enes.unam.mx}

\keywords{Yamabe problem, Bifurcation theory}
\subjclass{ 35J15, 58J55, 58E11	}


\begin{abstract}  
Let $(M^m,g_M)$ be a closed, connected manifold with positive scalar curvature and $(T^k,g)$ some flat $k$-Torus of  unit volume. By a result of F. Dobarro and E. Lami Dozo \cite{Dobarro}, there exists a unique $f: M \rightarrow \re_{>0}$ such that the warped product $M\times_f T^k$ has constant scalar curvature and unit volume.  We study the  Yamabe equation on these spaces.  We use techniques from bifurcation theory, along with spectral theory for warped products, to prove multiplicity results for the Yamabe problem. 
\end{abstract}

\maketitle

\section{Introduction}

Given a closed, Riemannian manifold $(N,h)$, the solution of the Yamabe problem (cf. in \cite{Parker}) gives a metric $\bar h$   for $N$, of constant scalar curvature and unit volume in the conformal class of $h$. These metrics are critical points of the Hilbert-Einstein functional restricted to conformal classes. The minima of the restricted functional are always realized. This was proved by the combined efforts of H. Yamabe \cite{Yamabe}, T. Aubin \cite{Aubin}, R. Schoen \cite{RSchoen}, and N. S. Trudinger \cite{Trudinger}, giving the solution of the Yamabe problem.  It is of interest to ask for other metrics of constant scalar curvature in the conformal class, as they are  critical points of the  Hilbert-Einstein functional on conformal classes, that are not necessarily minimizers.
Uniqueness of a metric of constant scalar curvature of unit volume in a conformal class is known to be true in some special cases: when the scalar curvature is non-positive, by the maximum principle; if $h$ is an Einstein metric that is not isometric to the round sphere,  by a result of M. Obata \cite{Obata}; and if the metric $h$ is close in the $C^{2,\alpha}$ topology to an Einstein metric and $dim(N)\leq7$ (or $dim(N)\leq24$  and $N$ is spin), by a result of  L. L. De Lima, P. Piccione and M. Zedda \cite{ Lima}. On the other hand, a rich variety of constant scalar curvature metrics that are not necessarily minimizers have been studied using bifurcation theory. Given a manifold $N$ of dimension greater than 2, let $\{g_t\}_{t\in [a,b]}$ be a path of metrics of constant scalar curvature and unit volume. We make use of  the notation in \cite{Lima} and call $t^* \in[a,b]$ a bifurcation instant in the path, if there exist   a sequence $t_n\in [a,b]$ and a sequence of Riemannian metrics $\bar g_n$ in the conformal class of $g_{t_n}$ ($\bar g_n \neq g_{t_n}$), of constant scalar curvature and unit volume, such that $\lim_{n\rightarrow \infty} t_n=t^*$ and $\lim_{n\rightarrow \infty} g_n=g_{t^*}$. We call the corresponding metric, $g_{t^*}$, a bifurcation point.
Bifurcation points in paths of constant scalar curvature metrics with unit volume have been studied recently, for example, on products of compact manifolds  in \cite{Lima}, on collapsing Riemannian submersions in \cite{Bettiol}, and on the product of a manifold of constant positive scalar curvature and a $k-Torus$ in \cite{Ramirez}.  Multiplicity of constant scalar curvature metrics have also been studied in the product manifold with a $k$-sphere \cite{Petean}, and, more generally, on sphere bundles in \cite{Otoba}.

Let $(M^m,g_M)$ be a closed, connected manifold with positive scalar curvature $R$ and $(T^k,g_t)$ some flat $k$-Torus of  unit volume. In this article we will be interested in finding multiplicity of constant scalar curvature metrics in warped products  $M^m\times_f T^k$, with the use of  bifurcation theory. Namely, we find bifurcation points in paths of metrics with unit volume and constant scalar curvature on $M^m\times_f T^k$, for fixed $g_M$ and fixed weight $f$.

Metrics of constant scalar curvature on the warped product of a compact manifold with a compact manifold of flat scalar curvature were studied  in \cite{Dobarro} by F.  Dobarro and E. Lami Dozo. It is shown there that, under these conditions, there is a unique weight $f$ that makes the warped product of constant scalar curvature and unit volume. Moreover, this weight does not depend on the metric of the second factor, as long as said metric is  flat and of unit volume.

Specifically, by letting $u=f^\frac{k+1}{2} \in C^{\infty}$,  the equation

$$\frac{-4k}{k+1} \Delta_M u + R u=  S u,$$
\noindent holds for $(M^m\times T^k,g_M+f^2g_t)$ (Theorem 2.1 in \cite{Dobarro}), where $S$ is the resulting scalar curvature on the warped product. A manifold of constant scalar curvature results if $S$ is constant. This leads to the linear eigenvalue problem 
$$\square \ u=  S u$$
on $M$, where $\square= \frac{-4k}{k+1} \Delta_M  + R $ and $S \in \re_{>0}$. This eigenvalue problem has a unique nonnegative solution $u_1$ such that $max\{u_1\}=1$ and $ S$ satisfies
\begin{equation}
\label{dozo}
S = \inf \{\int_M \left(\frac{4k}{k+1}|\nabla v|^2+R v^2\right)dV; v\in H^1(M), \int_M v^2 dV=1\}
\end{equation}
\noindent where $H^1(M)$ is the Sobolev space of order 1. The infimum of equation (\ref{dozo}) is realized by a constant multiple of $u_1$. Thus the weight we need is $f=u_1^\frac{2}{k+1} \in C^{\infty} $, normalized, so that $(M^m\times T^k,g_M+f^2 g_t)$ has unit volume   (we refer the reader to \cite{Dobarro}). It follows from equation (\ref{dozo}) that the constant $ S$ and the weight $f$ are independent of  $(T^k,g_t)$, as long as $g_t$ is flat, of unit volume and the dimension is $k$. Also, since we started with a positive scalar curvature $R$ for $M$, the constant $S$ will also be positive.

Our strategy will be to vary the metric $g_t$ on  $T^k$ to get a family of warped product metrics of constant scalar curvature and unit volume, and then prove the existence of a bifurcation point in this family.  We use this strategy, to generalize some recent results by Ramirez-Ospina in \cite{Ramirez}, on multiplicity of constant scalar curvature metrics on the product  of  a manifold with constant, positive scalar curvature, and a flat $k$-torus, to more general warped products of these manifolds. 

The spectrum of the warped product of compact manifolds, $M\times_f F$, was studied thoroughly by N. Eijiri in \cite{Eijiri}. It is shown there that given a base manifold, $(M,g_M)$, and a fiber $(F,g_F)$ with eigenvalues  $0=\lambda_0<\lambda_1\leq...\leq \lambda_i \rightarrow \infty$, then, the eigenvalues of $\Delta_{M\times_f F}$ are the list $\{\mu_i^j\}$ for $i,j=0,1,2,...$, where, for each $i$, the  $\{\mu_i^j\}$ are the eigenvalues for the operator 

\begin{equation}
\label{operator}
L_{\lambda_i}= \Delta_M-\frac{k}{f} \nabla_{grad \  f}+\frac{\lambda_i}{f^2}.
\end{equation}

\noindent Note that when $\lambda_0=0$, the operator is
\begin{equation}
\label{zerooperator}
L_{0}= \Delta_M-\frac{k}{f} \nabla_{grad \  f},
\end{equation}

\noindent which does not depend on the metric $g_t$ at all. 

We use bifurcation theory and combine the study of the operator $L_{\lambda}$  with the study of the spectrum of  $T^k$ to obtain the following result.

\begin{Theorem}
	\label{thm}
	Let $(M^m,g_M)$ be a closed Riemannian manifold with positive scalar curvature.  Consider the  warped products $M^m\times_f T^k$  of constant scalar curvature $S$ and unit volume,  of $g_M$ with any flat metric of unit volume on $T^k$.  Let $L_0$ be the elliptic operator given by equation (\ref{zerooperator}). Then, if $\frac{S}{m+k-1} \notin Spec\{L_0\}$, there are infinitely many bifurcation points for families of the form $M^m\times_f T^k$. Moreover, we may parametrize a set of bifurcation points with a codimension 1 submanifold of the space $\mathcal{M}$ of flat  metrics with unit volume on $T^k$.
\end{Theorem}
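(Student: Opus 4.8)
The plan is to convert the bifurcation problem into a spectral one and then exploit Eijiri's decomposition of the spectrum of the warped product. Throughout set $n=m+k=\dim(M\times_{f}T^{k})$, write $\bar g_{t}=g_{M}+f^{2}g_{t}$ for the warped metric attached to the flat metric $g_{t}$ on $T^{k}$, and adopt the convention that $\Delta$ has nonnegative spectrum. On the conformal class of a unit volume metric of constant scalar curvature $S$, the second variation of the normalized Hilbert--Einstein functional at $\bar g_{t}$ is a positive multiple of the quadratic form of the Jacobi operator $\Delta_{\bar g_{t}}-\tfrac{S}{n-1}$, acting on functions of $\bar g_{t}$-mean zero (the constants being frozen by the unit volume constraint). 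Hence $\bar g_{t}$ is a degenerate critical point exactly when $\tfrac{S}{n-1}\in\mathrm{Spec}(\Delta_{\bar g_{t}})$, and its Morse index is the number, with multiplicity, of eigenvalues of $\Delta_{\bar g_{t}}$ in $(0,\tfrac{S}{n-1})$. By the abstract variational bifurcation principle used in \cite{Lima} — a strict jump of the Morse index across a degenerate instant forces a bifurcation instant — it suffices to exhibit inside $\mathcal{M}$ a hypersurface of degenerate metrics across which this index jumps.

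Next I would apply Eijiri's theorem \cite{Eijiri}: $\mathrm{Spec}(\Delta_{\bar g_{t}})=\bigcup_{i\ge 0}\mathrm{Spec}(L_{\lambda_{i}})$, where $0=\lambda_{0}<\lambda_{1}\le\cdots$ are the eigenvalues of $(T^{k},g_{t})$ and $L_{\lambda}=L_{0}+\tfrac{\lambda}{f^{2}}$. Since $\partial_{\lambda}L_{\lambda}=f^{-2}>0$, first order perturbation theory shows that the bottom eigenvalue $\mu(\lambda)$ of $L_{\lambda}$ is continuous and strictly increasing, with $\mu(0)=0$ (the ground state of $L_{0}$ being the constants) and $\mu(\lambda)\to\infty$. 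As $\tfrac{S}{n-1}>0$, there is then a unique $\lambda^{*}>0$ with $\mu(\lambda^{*})=\tfrac{S}{n-1}$. This is where the hypothesis $\tfrac{S}{m+k-1}\notin\mathrm{Spec}(L_{0})$ enters: because $L_{0}$, and hence its contribution to the $i=0$ block, is independent of $g_{t}$ by (\ref{zerooperator}), that block is nondegenerate for every $g_{t}$ and adds a fixed integer to the index; all variation of the index comes from the blocks $i\ge 1$, and $\lambda^{*}\neq 0$ is genuinely attainable by the positive torus eigenvalues $\lambda_{i}$.

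It then remains to locate, inside the space $\mathcal{M}$ of unit volume flat metrics on $T^{k}\cong\re^{k}/\Z^{k}$, the metrics for which $\lambda^{*}$ is a torus eigenvalue. Identifying $g_{t}$ with a symmetric positive definite matrix $G$ of determinant one, the eigenvalues of $(T^{k},g_{t})$ are $4\pi^{2}\langle v,G^{-1}v\rangle$, $v\in\Z^{k}$. For a fixed primitive $v\neq 0$ the map $G\mapsto 4\pi^{2}\langle v,G^{-1}v\rangle$ is smooth, attains every value in $(0,\infty)$ as $G$ ranges over $\mathcal{M}$, and has nonvanishing differential off a lower dimensional set; thus $\lambda^{*}$ is a regular value and its level set contains a codimension one submanifold $\Sigma_{v}\subset\mathcal{M}$. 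Along $\Sigma_{v}$ one torus eigenvalue equals $\lambda^{*}$, so $\tfrac{S}{n-1}=\mu(\lambda^{*})\in\mathrm{Spec}(\Delta_{\bar g_{t}})$ and $\bar g_{t}$ is degenerate. Crossing $\Sigma_{v}$ transversally at a generic point (where $\lambda^{*}$ is attained by $\langle\cdot,G^{-1}\cdot\rangle$ on no class other than $\pm v$) moves $\lambda_{i}$ through $\lambda^{*}$, and by the strict monotonicity of $\mu$ the corresponding eigenvalue of $\Delta_{\bar g_{t}}$ crosses $\tfrac{S}{n-1}$ in a definite direction while no other branch crosses; hence the Morse index jumps and each such point of $\Sigma_{v}$ is a bifurcation point. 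Since $\Sigma_{v}$ is positive dimensional — and since infinitely many classes $v$, together with the higher branches $\mu_{j}(\lambda)=\tfrac{S}{n-1}$, yield further walls — this produces infinitely many bifurcation points parametrized by a codimension one submanifold of $\mathcal{M}$.

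The main obstacle will be the passage from "the Morse index jumps" to "a bifurcation instant exists": at a degenerate $\bar g_{t}$ the kernel of the Jacobi operator is typically large because of the toral symmetry, so I must invoke the variational (or equivariant) bifurcation theorem in a form requiring only a nonzero change of index rather than a one dimensional kernel, and verify its functional analytic hypotheses (Fredholmness of the Jacobi operator and the relevant Palais--Smale/compactness conditions) in the present conformal setting, exactly as in \cite{Lima,Ramirez}. The monotonicity of $\mu(\lambda)$ is precisely what rules out cancellation of crossings and guarantees a genuine index change; together with the regularity and transversality of the level sets $\Sigma_{v}$ it is the technical heart of the argument.
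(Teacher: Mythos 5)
Your proposal is essentially correct and rests on the same two pillars as the paper's proof --- the De Lima--Piccione--Zedda criterion (Proposition \ref{Zed}) and the strict monotonicity of the eigenvalues $\mu^j(\lambda)$ of $L_\lambda$ in $\lambda$ (Proposition \ref{minmax}), with the hypothesis $S/(m+k-1)\notin \mathrm{Spec}(L_0)$ playing the identical role of freezing the $i=0$ block --- but the mechanism producing the index jump is genuinely different. The paper never solves $\mu^j(\lambda)=S/(m+k-1)$: it runs the explicit global path $B_t=(t^{k-1}v_1,t^{-1}v_2,\dots,t^{-1}v_k)$, which collapses one direction of the dual lattice, and invokes Tsukada's inequality $\mu_i^0\,\|f\|_k^2/\|1\|_k^2\le\lambda_i$ (Proposition \ref{eigenwarped}) to push arbitrarily many eigenvalues of $\Delta_{G_t}$ below the threshold, so the Morse index grows without bound along the path; the codimension-one family is then a level set of the shortest-dual-vector length, from each point of which a disjoint such path is launched. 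You instead identify the degenerate walls $\Sigma_v=\{G:4\pi^2\langle v,G^{-1}v\rangle=\lambda^*\}$ and detect the index jump by a single transversal crossing; this is more economical (Tsukada's Theorem 2 is not needed, and the bifurcation points are pinned to the degenerate metrics themselves) and it directly exhibits the codimension-one parametrizing submanifold. Two points should be made explicit to close your argument: first, locating the bifurcation instant \emph{on} the wall (rather than merely somewhere in the crossing interval, which is all Proposition \ref{Zed} gives) requires the companion fact from \cite{Lima} that nondegenerate instants are locally rigid and hence never bifurcation instants; second, the ``generic point'' of $\Sigma_v$ avoiding all other walls needs a Baire-type argument over the countably many classes $v'$ and branches $j$, together with a justification that $\mu^j(\lambda)\to\infty$ so that $\lambda^*$ exists (immediate from the quadratic form $\int_M f^k\|\nabla v\|_g^2\,dV_g+\lambda\int_M f^{k-2}v^2\,dV_g$ computed in Proposition \ref{minmax}). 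Both proofs tacitly assume $k\ge2$ so that $\mathcal{M}$ is positive-dimensional.
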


We remark that the condition $\frac{S}{m+k-1} \notin Spec\{L_0\}$ is necessary for the construction of paths with bifurcating points, when we vary only the second factor. Consider for example the manifold $(M^m,g_M)= (\S^2\times T^2, g_1+g_2)$, where $g_1$ is the round metric and $g_2$ is a flat metric on $T^2$ such that $\lambda_1= \frac{2}{5}$, and the product has unit volume (see section 4 for details on the construction of such metrics on $T^k$). Then, for any flat metric $g_3$ on $T^2$, we have that $\lambda= \frac{2}{5}$ is also an eigenvalue of  $(\S^2\times T^2\times T^2, g_0+g_2+g_3)$  and then we get
$\lambda- \frac{Scal}{4+2-1}=0$, therefore losing a necessary condition (see (1) and (2) of Proposition \ref{Zed}) to ensure the existence of paths with bifurcation points. Small perturbations of this example  illustrate that this is   also  the case for  warped products.

Of course, if the scalar curvature $R$ of the manifold $(M^m,g)$ is constant and of unit volume from the beginning, then we have that  $f=1$ (see equation (\ref{dozo})) and we recover the case of the product manifold, studied by H. F. Ramirez-Ospina in \cite{Ramirez}. We remark that in this case,  condition $\frac{R}{m+k-1} \notin Spec\{\Delta_M\}$ is  also necessary for the construction of paths with bifurcation points, where we vary the metric on $T^k$ as discussed above.


\textit{Acknowledgments:} The author would like to thank Prof. Jimmy Petean for many helpful discussions on the subject and many useful comments on the first draft of this paper. This work was supported by program UNAM-DGAPA-PAPIIT IA106116.


\section{Some spectral theory for warped products}

Given a compact manifold $(M,g_M)$ of positive scalar curvature, we  fix the unique weight $f$ that makes the warped product  $(M\times T^k,g_M+ f^2g)$ of constant scalar curvature and unit volume ($g$ is any flat, unit volume, metric of $T^k$)  and study its spectra.

The eigenvalues $\alpha_i$ of the Laplacian of a compact  Riemannian manifold are discrete, of finite multiplicity and accumulate only at infinity: $\alpha_i \rightarrow \infty$ as $i\rightarrow \infty$. For the warped product $(M\times T^k,g_M+ f^2 g)$, the eigenvalues are given by the following procedure:

We  first denote the eigenvalues of $(T^k,g)$, by $0=\lambda_0<\lambda_1\leq...\leq \lambda_i \rightarrow \infty$. Then, for each $i=0, 1,2,...$, we define the operator

\begin{equation}
\label{operator}
L_{\lambda_i}= \Delta_M-\frac{k}{f} \nabla_{grad \  f}+\frac{\lambda_i}{f^2},
\end{equation}

\noindent and denote its eigenvalues by $\mu_i^0<\mu_i^1\leq...\leq \mu_i^j \rightarrow \infty$. The eigenvalues of $\Delta_{M\times T^k}$ are exactly the list $\{\mu_i^j\}$ for $i,j=0,1,2,...$ (we refer the reader to the work of N. Eijiri \cite{Eijiri} for details).

We recall the following results by K. Tsukada.
 
\begin{Proposition} \label{eigenwarped} \emph{(Theorem 2, in \cite{Tsukada})}
	For a warped product of compact, connected manifolds $(M^m\times T^k,g_M+ f^2 g)$, we have $\mu_i^0 \  \frac{||f||_k^2}{||1||_k^2}\leq \lambda_i$, for $i\in \mathbb{N}$. Equality holds if and only if $ f$ is constant.
\end{Proposition}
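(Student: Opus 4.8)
The plan is to obtain the estimate directly from the variational (Rayleigh quotient) characterization of the bottom eigenvalue $\mu_i^0$ of $L_{\lambda_i}$, tested against the constant function. First I would record that, since the Riemannian volume form of $(M^m\times T^k, g_M+f^2g)$ splits as $f^k\,dV\,dV_g$, the operator $L_{\lambda_i}$ is self-adjoint with respect to the weighted inner product $\langle u,v\rangle_\mu=\int_M uv\,f^k\,dV$ on $M$. An integration by parts, in which the drift term $-\tfrac{k}{f}\nabla_{\mathrm{grad}\,f}$ exactly cancels the contribution coming from differentiating the weight $f^k$, then shows that the associated quadratic form is
\[
\langle L_{\lambda_i}u,u\rangle_\mu \;=\; \int_M |\nabla u|^2\,f^k\,dV \;+\; \lambda_i\int_M u^2\,f^{k-2}\,dV ,
\]
so that
\[
\mu_i^0 \;=\; \inf_{u\not\equiv 0}\;\frac{\displaystyle\int_M |\nabla u|^2\,f^k\,dV + \lambda_i\int_M u^2\,f^{k-2}\,dV}{\displaystyle\int_M u^2\,f^k\,dV}.
\]

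Next I would simply insert the constant test function $u\equiv 1$. The gradient term vanishes, leaving $\mu_i^0\le \lambda_i\big(\int_M f^{k-2}\,dV\big)\big/\big(\int_M f^k\,dV\big)$. Rearranging and recognising $\int_M f^k\,dV=\|f\|_k^2$ and $\int_M f^{k-2}\,dV=\|1\|_k^2$, where $\|\cdot\|_k$ is the weighted $L^2$-norm $\|u\|_k^2=\int_M u^2 f^{k-2}\,dV$, gives precisely the asserted inequality $\mu_i^0\,\|f\|_k^2/\|1\|_k^2\le\lambda_i$. The only point requiring care here is bookkeeping the weight correctly, namely that the potential $\lambda_i/f^2$ weighted by the volume density $f^k$ produces the density $f^{k-2}$.

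For the equality statement I would argue that equality forces the constant function $u\equiv 1$ to be a minimiser of the Rayleigh quotient, hence an eigenfunction for $\mu_i^0$. Because $L_{\lambda_i}$ is a Schrödinger-type elliptic operator on a compact manifold, its lowest eigenvalue is simple with a nowhere-vanishing eigenfunction (the strong maximum principle / Perron--Frobenius property of the ground state); since $1>0$, equality makes $1$ the ground state. Computing $L_{\lambda_i}1=\lambda_i/f^2$ and using $\lambda_i>0$ for $i\in\mathbb{N}$, this is proportional to the constant $1$ if and only if $f$ is constant. The converse is immediate: when $f$ is constant the drift term drops out, $u\equiv 1$ is manifestly the ground state with $\mu_i^0=\lambda_i/f^2$, and the two sides coincide.

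I expect the genuine obstacle to lie not in the inequality, which is a one-line application of the min-max principle once the weighted quadratic form is identified, but in the equality discussion: it depends on the simplicity and strict positivity of the first eigenfunction of $L_{\lambda_i}$, which must be invoked (or established via the maximum principle) in order to conclude that equality can be attained only by a constant warping function $f$.
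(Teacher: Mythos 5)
The paper offers no proof of this proposition --- it is quoted as Theorem 2 of \cite{Tsukada} --- so there is no in-paper argument to compare against; judged on its own, your proof is correct and is essentially Tsukada's original one. The quadratic-form identity you derive,
\[
\langle L_{\lambda} v,v \rangle=\int_M f^k \|\nabla v\|_g^2\, dV_g+\lambda\int_M f^{k-2}v^2\, dV_g,
\qquad \langle u,v\rangle=\int_M uv\,f^k\,dV_g,
\]
is exactly the one the paper itself establishes (via equation (\ref{gauss})) in the proof of Proposition \ref{minmax}, and testing with $v\equiv 1$ gives $\mu_i^0\int_M f^k\,dV_g\le\lambda_i\int_M f^{k-2}\,dV_g$, which is the stated inequality under your reading $\|u\|_k^2=\int_M u^2 f^{k-2}\,dV_g$. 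Two remarks. First, $\|\cdot\|_k$ is never defined in the paper, so that reading is an interpretive step on your part; it is the only one under which the constant-test-function argument produces the stated ratio (with the natural $f^k$-weighted norm the ratio would instead be $\int f^{k+2}/\int f^k$, a strictly stronger claim by Cauchy--Schwarz that $v\equiv1$ does not yield), and it is consistent with Tsukada's conventions, so this is a notational caveat rather than a gap. Second, the equality discussion is slightly over-engineered: you do not need simplicity or Perron--Frobenius positivity of the ground state, only the standard fact that a function attaining the infimum of the Rayleigh quotient is an eigenfunction for $\mu_i^0$; then $L_{\lambda_i}1=\lambda_i/f^2=\mu_i^0$ forces $f$ constant because $\lambda_i>0$ for $i\ge1$ (for $i=0$ both sides vanish for arbitrary $f$, so the index range must exclude $0$). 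The converse, that constant $f$ gives equality, is immediate as you note.
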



For fixed $\lambda$, we denote the eigenvalues of the operator $L_{\lambda}=L(\lambda)$ defined in equation (\ref{operator}), by $\mu^0(\lambda)\leq \mu^1(\lambda)\leq...\leq \mu^j(\lambda) \leq...$, for $j\in \mathbb{N}$. In this sense, $\mu^0$ is increasing as a function of $\lambda$: 

\begin{Proposition} \label{ineq} \emph{(Lemma 1, in \cite{Tsukada})}
Let $0\leq \lambda \leq \lambda'$, then $\mu^0(\lambda)\leq \mu^0(\lambda')$. Equality holds if and only if $ \lambda = \lambda'$.
\end{Proposition}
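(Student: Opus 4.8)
The plan is to place $L_\lambda$ inside a self-adjoint framework adapted to the warped geometry and then deduce the monotonicity of its bottom eigenvalue from the min--max principle. The obstruction to treating $L_\lambda$ as a Schr\"odinger operator on $L^2(M,dV_M)$ is the first-order drift term $-\frac{k}{f}\nabla_{grad\, f}$, which is not symmetric with respect to the plain Riemannian measure. The remedy is to use the weighted measure $d\mu = f^k\, dV_M$, which is exactly the base factor of the Riemannian volume of $M\times_f T^k$. First I would verify by integration by parts that, for all $u,v\in C^\infty(M)$, one has $\int_M (L_0 u)\, v\, f^k\, dV_M = \int_M \langle \nabla u,\nabla v\rangle\, f^k\, dV_M$; this identifies $L_0$ of equation (\ref{zerooperator}) as the weighted (Bakry--\'Emery) Laplacian of the measure $f^k\,dV_M$, which is self-adjoint and nonnegative on $L^2(M, f^k\, dV_M)$ with Dirichlet form $Q_0(u)=\int_M |\nabla u|^2 f^k\, dV_M$.

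Since $L_\lambda = L_0 + \frac{\lambda}{f^2}$ and the extra term is multiplication by the positive function $\lambda/f^2$, the full operator $L_\lambda$ is again self-adjoint on $L^2(M, f^k\, dV_M)$, now with quadratic form $Q_\lambda(u)=\int_M |\nabla u|^2 f^k\, dV_M + \lambda\int_M \frac{u^2}{f^2} f^k\, dV_M$. Consequently its lowest eigenvalue has the Rayleigh characterization $\mu^0(\lambda)=\inf_{u\ne 0} Q_\lambda(u)\big/\int_M u^2 f^k\, dV_M$. For each fixed $u\ne 0$ this quotient is an affine function of $\lambda$ with strictly positive slope $\int_M \frac{u^2}{f^2}f^k\, dV_M \big/ \int_M u^2 f^k\, dV_M$, hence strictly increasing in $\lambda$; taking the infimum over $u$ preserves monotonicity and yields $\mu^0(\lambda)\le \mu^0(\lambda')$ whenever $0\le \lambda\le \lambda'$.

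For the strict statement I would argue by contradiction. Suppose $\lambda<\lambda'$ but $\mu^0(\lambda)=\mu^0(\lambda')$, and let $u_{\lambda'}$ be a ground-state eigenfunction of $L_{\lambda'}$ normalized so that $\int_M u_{\lambda'}^2 f^k\, dV_M=1$. Evaluating the Rayleigh quotient of $L_\lambda$ at $u_{\lambda'}$ gives $\mu^0(\lambda)\le Q_\lambda(u_{\lambda'}) = Q_{\lambda'}(u_{\lambda'}) - (\lambda'-\lambda)\int_M \frac{u_{\lambda'}^2}{f^2} f^k\, dV_M = \mu^0(\lambda') - (\lambda'-\lambda)\int_M \frac{u_{\lambda'}^2}{f^2} f^k\, dV_M$. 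Under the assumed equality this forces $(\lambda'-\lambda)\int_M \frac{u_{\lambda'}^2}{f^2}f^k\, dV_M\le 0$; but $\lambda'-\lambda>0$, $f>0$, and $u_{\lambda'}\not\equiv 0$, so the integral is strictly positive, a contradiction. Hence $\mu^0(\lambda)<\mu^0(\lambda')$, and equality holds if and only if $\lambda=\lambda'$.

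I expect the only genuine subtlety to be the first step, namely recognizing the correct weighted $L^2$-structure under which the drift operator becomes self-adjoint. Once $L_\lambda$ is exhibited as a weighted Laplacian perturbed by the positive potential $\lambda/f^2$, the conclusion is a standard application of the min--max principle together with the observation that the perturbation is monotone in $\lambda$; the supporting computations (the integration by parts establishing symmetry of $L_0$ and the affine dependence of the Rayleigh quotient on $\lambda$) are routine. Positivity of the ground state $u_{\lambda'}$ is not strictly needed --- only that it is a nonzero eigenfunction --- which keeps the strict-inequality argument elementary.
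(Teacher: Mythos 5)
Your proof is correct and is essentially the argument the paper uses for its generalization, Proposition \ref{minmax} (for the statement at hand the paper itself only cites Tsukada): identify the natural inner product as the one weighted by $f^k\,dV_M$, integrate by parts to reduce the quadratic form to $\int_M f^k\|\nabla v\|^2\,dV_M+\lambda\int_M f^{k-2}v^2\,dV_M$, and invoke the variational (Rayleigh/min--max) characterization of the eigenvalue. If anything, your treatment of the equality case --- testing the Rayleigh quotient of $L_{\lambda}$ against a ground state of $L_{\lambda'}$ --- is more careful than the paper's, which passes from strict monotonicity of the quadratic forms to strict monotonicity of the infima without evaluating at an eigenfunction.
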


It will be useful for our purposes, to generalize Proposition \ref{ineq} to any eigenvalue $\mu^j(\lambda)$, not just the first one. For this, we consider the min-max characterization of $\mu^j$.
 
  Let $Gr_j(C^{\infty}(M))$ be the $j-$dimensional Grassmanian in $C^{\infty}(M)$. Recall that
$$\mu^j(\lambda)= \inf_{V \in Gr_j(C^{\infty}(M))} \sup_{v\in V \setminus \{0\}} \frac{\langle L_{\lambda} v,v \rangle}{\langle v,v\rangle}$$

With this characterization we are able to generalize Proposition \ref{ineq} to any eigenvalue $\mu^j(\lambda)$ of $L_{\lambda}$. 
\begin{Proposition}
	\label{minmax}
Let $0\leq \lambda \leq \lambda'$, then  $\mu^j(\lambda)\leq \mu^j(\lambda')$, for $j\in \mathbb{N}$. Equality holds if and only if $ \lambda = \lambda'$.
\end{Proposition}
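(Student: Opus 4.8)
The plan is to reduce everything to the explicit form of the Rayleigh quotient attached to $L_\lambda$. Recall that $L_\lambda$ is self-adjoint with respect to the weighted inner product $\langle u,v\rangle = \int_M u v\, f^k\, dV$ induced by the warped volume form, and that the first-order drift term $-\frac{k}{f}\nabla_{grad\,f}$ is precisely what makes the principal part $\Delta_M-\frac{k}{f}\nabla_{grad\,f}$ symmetric against this weight. First I would integrate by parts to record the quadratic form: for $v\in C^{\infty}(M)\setminus\{0\}$,
$$\frac{\langle L_\lambda v,v\rangle}{\langle v,v\rangle}=\frac{\int_M f^k|\nabla v|^2\,dV+\lambda\int_M f^{k-2}v^2\,dV}{\int_M f^k v^2\,dV},$$
where the two first-order terms cancel. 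The key observation is that, for fixed $v$, this quotient is an affine, strictly increasing function of $\lambda$, because $f>0$ on the compact manifold $M$ forces $\int_M f^{k-2}v^2\,dV>0$ whenever $v\neq 0$.

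From here the non-strict monotonicity is immediate from the min-max characterization. For $0\le\lambda\le\lambda'$ and any $v\neq 0$ we have $\frac{\langle L_\lambda v,v\rangle}{\langle v,v\rangle}\le\frac{\langle L_{\lambda'}v,v\rangle}{\langle v,v\rangle}$; taking the supremum over $v$ in a fixed $j$-dimensional subspace $V$ and then the infimum over all $V\in Gr_j(C^{\infty}(M))$ preserves the inequality, yielding $\mu^j(\lambda)\le\mu^j(\lambda')$.

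The substantive content is the equality statement, and this is where I expect the main obstacle. Assume $\lambda<\lambda'$ and let $V_0\in Gr_j(C^{\infty}(M))$ be a subspace realizing the infimum in the min-max formula for $\mu^j(\lambda')$; it may be taken to be spanned by eigenfunctions of $L_{\lambda'}$, hence smooth by elliptic regularity, so that $\sup_{v\in V_0\setminus\{0\}}\frac{\langle L_{\lambda'}v,v\rangle}{\langle v,v\rangle}=\mu^j(\lambda')$. On the unit sphere $S$ of the finite-dimensional space $V_0$ the continuous, scale-invariant function $v\mapsto(\lambda'-\lambda)\frac{\int_M f^{k-2}v^2\,dV}{\int_M f^k v^2\,dV}$ is strictly positive, hence attains a positive minimum $\delta>0$ by compactness. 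By homogeneity of the Rayleigh quotient it follows that $\frac{\langle L_\lambda v,v\rangle}{\langle v,v\rangle}\le\mu^j(\lambda')-\delta$ for every $v\in V_0\setminus\{0\}$, and therefore $\mu^j(\lambda)\le\sup_{v\in V_0\setminus\{0\}}\frac{\langle L_\lambda v,v\rangle}{\langle v,v\rangle}\le\mu^j(\lambda')-\delta<\mu^j(\lambda')$, contradicting the assumed equality. The delicate step is exactly this passage from the pointwise strict inequality of Rayleigh quotients to a strict inequality of their suprema: without the compactness of $S$ in the optimizing subspace the gap could degenerate to zero, so one must argue with a genuine minimizer $V_0$ rather than with an approximating sequence of subspaces.
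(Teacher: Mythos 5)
Your proof follows essentially the same route as the paper's: integrate by parts to identify the quadratic form $\langle L_\lambda v,v\rangle=\int_M f^k|\nabla v|^2\,dV+\lambda\int_M f^{k-2}v^2\,dV$, observe it is monotone in $\lambda$ since $f>0$, and pass through the min-max characterization. Your handling of the equality case is in fact more careful than the paper's, which deduces strictness of $\mu^j(\lambda)<\mu^j(\lambda')$ directly from the pointwise strict inequality of the quadratic forms without the compactness argument on an optimizing eigenspace that you supply; that step is a genuine (if standard) gap in the paper that your argument closes.
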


\begin{proof}
First note that
$$ \int_M ( f^k v) (\Delta_M v) \ dV_g$$
\begin{equation}
\label{gauss}
=\int_M  (\nabla v, \nabla (f^k v))_{g} dV_g= \int_M f^k (\nabla v, \nabla v)_{g} dV_g+\int_M k f^{k-1}v (\nabla v, \nabla f)_{g} dV_g.
\end{equation}
We compute 


$$\langle L_{\lambda} v,v \rangle= \int_M v (\Delta_M v- \frac{k}{f}(\nabla v, \nabla f)_{g}+\frac{\lambda}{f^2}v)  f^k dV_g,$$

$$=\int_M \left(f^k v \Delta_M v- k f^{k-1}  v (\nabla v, \nabla f)_{g}+\lambda f^{k-2}  v^2\right) dV_g,$$
\noindent and using  equation (\ref{gauss}),
$$\langle L_{\lambda} v,v \rangle= \int_M f^k (\nabla v, \nabla v)_{g} dV_g+\int_M k f^{k-1}v (\nabla v, \nabla f)_{g} dV_g- \int_M k f^{k-1}v (\nabla v, \nabla f)_{g} dV_g$$
$$+ \ \lambda  \int_M f^{k-2}  v^2 dV_g$$
$$=\int_M f^k || \nabla v||_g^2 dV_g+ \lambda \int_M f^{k-2} v^2 dV_g.$$
\noindent Since $f$ is positive, this implies that 
$$\langle L_{\lambda} v,v \rangle\leq \langle L_{\lambda'} v,v \rangle,$$
\noindent if  $0\leq \lambda\leq \lambda'$. Also, $\langle L_{\lambda} v,v \rangle= \langle L_{\lambda'} v,v \rangle$, if and only if  $ \lambda= \lambda'$.

Finally, since for each $j \in \mathbb{N}$,

$$\mu^j(\lambda)= \inf_{V \in Gr_j(C^{\infty}(M))} \sup_{v\in V \setminus \{0\}} \frac{\langle L_{\lambda} v,v \rangle}{\langle v,v\rangle},$$
then, 
$\mu^j(\lambda)\leq \mu^j(\lambda')$,  if  $0\leq \lambda\leq \lambda'.$ And  $\mu^j(\lambda)= \mu^j(\lambda')$,  if  and only if $\lambda=\lambda'.$

\end{proof}

\section{Bifurcation of solutions for the Yamabe problem}

We now take a look at the bifurcation of solutions for the Yamabe problem. Consider a path of metrics $\{g_t\}_{t\in [a,b]}$, of constant scalar curvature and unit volume for a fixed closed manifold $N$ ($dim(N\geq3)$). We use the notation in \cite{Lima} and call $t^* \in[a,b]$ a bifurcation instant for the path if there exist   a sequence $t_n\in [a,b]$ and a sequence of Riemannian metrics $\bar g_n$ in the conformal class of $g_{tn}$ ($\bar g_n \neq g_{tn}$), of constant scalar curvature and unit volume, such that $\lim_{n\rightarrow \infty} t_n=t^*$ and $\lim_{n\rightarrow \infty} g_n=g_{t*}$. We will call the corresponding metric, $g_{t^*}$, a bifurcation point.

Our result will be an application of the bifurcation  of solutions for the elliptic equation of constant scalar curvature. The following is due to De Lima, Piccione and Zedda, \cite{Lima}.

\begin{Proposition}\emph{(Theorem 3.3, in \cite{Lima})}
\label{Zed}
Let $N$ be a compact manifold with $dim (N)\geq3$, and $ \{g_{t}\}_{t \in [a,b]}$ a $C^{1}$-path of Riemannian metrics on $N$ having constant scalar curvature and unit volume. For each $t\in[a,b]$, let $S_t$ denote the scalar curvature of $g_t$ and $n_t$, the number of eigenvalues of the Laplace-Beltrami operator, $\Delta_{g_t}$, counted with multiplicity, that are less than  $\frac{S_t}{dim(N)-1}$.
Suppose that the following are satisfied,
\begin{enumerate}
\item $\frac{S_a}{dim(N)-1}=0$ or $\frac{S_a}{dim(N)-1}\notin Spec\{\Delta_{g_a}\}$.
\item $\frac{S_b}{dim(N)-1}=0$ or $\frac{S_b}{dim(N)-1}\notin Spec\{\Delta_{g_b}\}$.
\item $n_a \neq n_b$.
\end{enumerate}
Then there exists a bifurcation instant $t_*\in  (a,b)$ for the family of metrics $ \{g_{t}\}_{t \in [a,b]}$. 

\end{Proposition}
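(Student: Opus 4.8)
The plan is to realize unit-volume constant-scalar-curvature metrics in a fixed conformal class as critical points of a variational problem and then detect bifurcation through a jump in the Morse index of a ``trivial'' branch of solutions. Write $n=\dim(N)$ and $a_n=\frac{4(n-1)}{n-2}$. For each $t$, a metric $\tilde g=u^{4/(n-2)}g_t$ with $u>0$ has constant scalar curvature and unit volume precisely when $u$ is a critical point of the Hilbert--Einstein functional restricted to the unit-volume metrics of $[g_t]$; equivalently, $u$ solves the Yamabe equation $-a_n\Delta_{g_t}u+S_t u = c\,u^{(n+2)/(n-2)}$ subject to the volume constraint. Since $g_t$ already has constant scalar curvature $S_t$, the constant function is a solution for every $t$, giving a smooth branch $t\mapsto g_t$ of critical points, and bifurcation of the path is exactly bifurcation of new solutions off this branch.

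Second, I would compute the Hessian of the restricted functional at this trivial solution. Linearizing the Yamabe equation about the constant $u\equiv\mathrm{const}$ and using $S_{g_t}\equiv S_t$, one finds that the linearization reduces, on functions orthogonal to the constants, to $-\Delta_{g_t}\phi=\frac{S_t}{n-1}\phi$. Hence the Hessian is a self-adjoint Fredholm operator (a compact perturbation of the identity on $H^1(N)$): its nullity equals the multiplicity of $\frac{S_t}{n-1}$ as an eigenvalue of $\Delta_{g_t}$, and its Morse index coincides with $n_t$ up to the contribution of the constant mode, which is the same at every $t$ and therefore cancels in the comparison $n_a$ versus $n_b$. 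Conditions (1) and (2), that $\frac{S_a}{n-1}$ and $\frac{S_b}{n-1}$ either vanish or avoid $\mathrm{Spec}(\Delta_{g_t})$, guarantee that the constrained Hessian is non-degenerate at the endpoints (in the vanishing case the only zero eigenvalue of $\Delta_{g_t}$ comes from the constants, which the unit-volume constraint removes).

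Third, I would invoke an abstract variational bifurcation principle for a $C^1$-path of functionals carrying a known branch of critical points: if the Hessians along the branch are self-adjoint Fredholm operators, the endpoints are non-degenerate, and the Morse index changes between the endpoints, then the spectral flow is nonzero and some interior instant $t_*\in(a,b)$ must be a bifurcation instant. Condition (3), $n_a\neq n_b$, supplies exactly the required index jump. The main obstacle is this last step, namely converting the spectral statement ``the Morse index jumps'' into the existence of genuinely new solutions. The clean argument is by contradiction: if no $t\in(a,b)$ were a bifurcation instant, the trivial branch would be locally isolated among solutions, so its local Leray--Schauder degree would be constant along the whole path by homotopy invariance; but this degree is determined modulo sign by the parity of the Morse index, which changes because $n_a\neq n_b$, a contradiction. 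Care is needed to set the problem up so that the constrained Hessians are genuinely Fredholm, so that the spectral flow is well defined, and to handle the constant mode and the volume constraint consistently throughout.
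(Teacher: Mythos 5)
You should first note that the paper offers no proof of this Proposition at all: it is quoted verbatim from De Lima--Piccione--Zedda \cite{Lima} (their Theorem 3.3), so your attempt can only be measured against the proof given there. In outline you do reconstruct that proof's architecture faithfully: unit-volume constant scalar curvature metrics in $[g_t]$ as critical points of the volume-constrained Hilbert--Einstein functional, the trivial branch $t\mapsto g_t$ of constant solutions, the linearization reducing on the mean-zero subspace to $-\Delta_{g_t}\phi=\frac{S_t}{n-1}\phi$, the constrained Hessian as an identity-minus-compact (hence Fredholm) operator on $H^1$, nondegeneracy at the endpoints from conditions (1)--(2), and the index jump from condition (3) fed into an abstract variational bifurcation principle. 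That is the route of \cite{Lima}.

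The genuine gap is in what you call the ``clean argument'' for the decisive last step. You assert that the local Leray--Schauder degree of the trivial branch is determined, up to sign, by the parity of the Morse index, ``which changes because $n_a\neq n_b$.'' But $n_a\neq n_b$ does not imply a parity change: if the index jumps by an even number --- for instance when an eigenvalue of even multiplicity crosses the threshold $\frac{S_t}{n-1}$, which is the \emph{generic} situation in the torus setting of this very paper, where the eigenvalues $\lambda_i=4\pi^2\|\beta_i\|^2$ typically carry the multiplicity of $\pm\beta_i$ --- then $(-1)^{n_a}=(-1)^{n_b}$ and the degree-theoretic contradiction evaporates. This is precisely why \cite{Lima} does not argue via topological degree but via an abstract bifurcation theorem for paths of \emph{gradient} maps (in the lineage of Krasnosel'skii's potential-operator bifurcation theorem, in its spectral-flow formulation): for critical points of a $C^1$-path of functionals with Fredholm Hessians and nondegenerate endpoints, a Morse index jump of \emph{any} size, even or odd, makes the spectral flow nonzero and forces bifurcation. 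Your earlier sentence invoking spectral flow is the correct mechanism; the degree argument you then substitute for it would fail in exactly the cases this paper needs. A smaller point to tighten: your claim that the constant-mode contribution ``is the same at every $t$ and therefore cancels'' presupposes that $S_a$ and $S_b$ have the same sign, since the zero eigenvalue is counted in $n_t$ only when $S_t>0$; this is harmless in the application here (where $S_t\equiv S>0$ is fixed along the path), but as a proof of the general statement it needs to be addressed rather than asserted.
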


\noindent Often, $n_t$ is referred also as the Morse index of $g_t$ (as a critical point of the Hilbert-Einstein functional). In this sense, if either $\frac{S_t}{dim(N)-1}=0$ or $\frac{S_t}{dim(N)-1}\in Spec\{\Delta_{g_t}\}$, it is said that $g_t$ is degenerated.

Following  Proposition \ref{Zed}, our strategy will be  to construct the $C^{1}$-path of Riemannian metrics   $(M^m\times T^k,g_M+ f^2 g_t)$, having constant scalar curvature and unit volume. We then prove that the three conditions mentioned in Proposition \ref{Zed} are satisfied, thus assuring the existence of bifurcation points in the paths constructed. 

\section{Proof of Theorem \ref{thm}}

We are now ready to prove Theorem \ref{thm}.
\begin{proof}

We first recall some elementary facts on the eigenvalues of the Laplacian of flat, unit volume metrics,  on $T^k$; we refer the reader, for example, to \cite{Berger}. Recall that a flat metric on $T^k$ is given by $\re^k/\Gamma$, where $\Gamma$ acts by isometries on $\re^k$ and the lattice, $\Gamma\subset \re^k$, is a set consisting of the integral linear combinations of a basis of $\re^k$. We may thus associate to the lattice $\Gamma$ a matrix $B\in GL(k)$, with its columns given by a basis $(v_1, v_2,...,v_k)$ of $\re^k$.  Moreover, since we are working with metrics of unit volume, we will consider only basis $B$, such that $|det(B)|=1$. Recall also that if  two basis are isometric in $\re^n$, then the lattices associated to them, and thus the resulting metrics of $T^k$, are the same. 

Now, given a lattice $\Gamma$, the eigenvalues of the Laplacian of the Riemannian metric for $T^k=\re^k/\Gamma$, are given by $\lambda_i=4 \pi^2||\beta_i||^2$, where $\beta_i \in \Gamma^*$, and $\Gamma^*$ is the lattice dual to $\Gamma$. The dual lattice is the lattice associated to the dual of the basis associated to $\Gamma$. In practice, this means that if $B$ is a matrix of a basis associated to $\Gamma$, then the inverse of the transpose, $(B^T)^{-1}$, is a matrix of a basis associated to $\Gamma^*$. 
 In the following, we will denote by $\mathcal M$ the set of  flat metrics of unit volume on $T^k$. Given a metric $g_t$ in $\mathcal M$, we will denote by $\lambda_{i}(t)$ its eigenvalues and by $G_t$ the warped product metric   $G_t=g_M+f^2 g_t$. Also, we will let $\bar S=\frac{S}{m+k-1}$ (recall that $f$ is fixed and that $S$ is the positive, constant, scalar curvature of $(M\times T^k,G_t)$; neither, as argued above,  depend on $t$).
Given a metric $g_1 \in \mathcal{M}$ with associated matrix $B=(v_1,v_2,...,v_k)$, we denote  its dual matrix by 
 $(B^T)^{-1}$, with columns $( w_1,w_2,..., w_k)$, i.e. $(B^T)^{-1}=( w_1,w_2,..., w_k)$. With this in mind, for $t\geq1$, consider the path of flat, unit volume metrics $g_t\in \mathcal{M}$, given by  the family of matrices
\begin{equation}
\label{eigenlambda1}
 B_t=(t^{k-1} v_1,\frac{1}{t} v_2,...,\frac{1}{t}v_k),
 \end{equation}

\noindent (hence $(B_t^T)^{-1}=(\frac{1}{t^{k-1}} w_1,t w_2,...,t w_k)$). Consider also the corresponding path $\{G_t\}_{t\geq1}$, with $G_t=g_M+f^2g_t$. 

 We now show that there exist $a,b\in \re_{>0}$ such that the path $\{G_t\}_{t \in [a,b]}$, contains a bifurcation point.  Recall from Proposition \ref{Zed}, that we want to start our path of metrics $\{G_t\}_{t\in [a,b]}$, with a metric $G_a$ such that $\bar S\notin Spec\{\Delta_{G_a}\}$.
 If $\bar S\notin Spec\{\Delta_{G_1}\}$, we let $G_a=G_1$, otherwise we do the following: 
 
 Suppose that $\bar S\in Spec\{\Delta_{G_1}\}$. Let  $\mu_{i_1}^{j_1}, \mu_{i_2}^{j_2},...,\mu_{i_n}^{j_n}$ be the finite set of eigenvalues of the Laplacian of $G_1$, equal to $\bar S$. Recall the hypothesis of Theorem \ref{thm}:  $\bar S \notin Spec\{L_0\}$;
 this means that neither of $i_1,i_2,...,i_n$ is equal to zero, for the given set of eigenvalues. Thus, by construction, advancing the path (i.e. increasing $t$)  modifies the values of each of the  eigenvalues of the Laplacian of the metric on $T^k$, in particular, of $\lambda_{i_1}, \lambda_{i_2},...,\lambda_{i_n}$. By Proposition \ref{minmax}, this  in turn means that the values of  $\mu_{i_1}^{j_1}, \mu_{i_2}^{j_2},...,\mu_{i_n}^{j_n}$, are modified, as they are increasing functions of $\lambda_{i_1}, \lambda_{i_2},...,\lambda_{i_n}$. Since the eigenvalues are discrete and the path is continuous, there exists in the path a metric $G_a$ ($a>1$) such that  $\bar S\notin Spec\{\Delta_{G_a}\}$.

Recall that  each metric $g_t$, on the path $\{g_t\}_{t\geq1}$,  has as a  subset of its set of eigenvalues of its Laplacian, the list $\{\lambda_{i_q}\}_{q\in \mathbb{N}} \subset \{\lambda_{i}\}_{i\in \mathbb{N}} $, given by  
\begin{equation}
	\label{eigenlambda}
	\lambda_{i_q}(t)=4\pi^2 \frac{1}{ (t^{(k-1)})^2} q^2 ||w_1||^2,
\end{equation}
\noindent $q \in \mathbb N$ (namely, those associated with the integer multiples of $ \frac{1}{ t^{(k-1)}} w_1$ in $\Gamma^*$). 

Thus, for the corresponding path $\{G_t\}_{t\geq1}$, equation (\ref{eigenlambda}) and Proposition \ref{eigenwarped} imply that for  $q\in\mathbb N$ we have
\begin{equation}
\label{eigenlambda3}
\mu_{i_q}^0 (t)  \leq  \frac{||1||_k^2}{||f||_k^2 } \  4\pi^2 \frac{1}{ (t^{(k-1)})^2} q^2 ||w_1||^2.
\end{equation}

Now, let $n_a$ be the Morse index of $G_a$, that is, the number of eigenvalues of the Laplacian of $G_a$, counted with multiplicity, such that they are less than $\bar S$. Let $q_1$ be a positive integer such that  $q_1>n_a$. Then, we advance our path $\{G_t\}_{t\geq a}$, starting from $G_a$, until $G_{t_1}$, where $t_1\in\re_{>0}$ is big enough so that 

$$     \frac{||1||_k^2}{||f||_k^2}  \  \left( 4\pi^2 q_1^2||w_1||^2\frac{1}{ t_1^{2(k-1)}}\right)  <\bar S.$$
 Hence, according to  equation (\ref{eigenlambda3}), we have
	$$\mu_{i_q}^0 (t_1)  <\bar S,$$
for every $q\in \mathbb N$, $q \leq q_1$. That is,  $G_{t_1}$ has at least these $q_1$ eigenvalues that are less than $\bar S$. This makes the Morse index of  $G_{t_1}$, strictly greater than that of $G_{a}$. To finish the path,  we must find  a final metric $G_b$, $b \in (t_1,t_1+\epsilon)$, $\epsilon>0$, such that $\bar S\notin Spec\{\Delta_{G_b}\}$, while keeping its Morse index greater than $n_a$. We achieve this by advancing our  path a little more, as we did in the case of $G_a$.  

Note that, by construction, advancing the path modifies all the eigenvalues of the Laplacian of $g_{t_1}$, as we increase $t$. This in turn modifies those eigenvalues of the Laplacian of $G_{t_1}$ that were equal to $\bar S$, since, by Proposition \ref{minmax}, the eigenvalues $\mu^{j}(\lambda)$ are strictly increasing, as functions of $\lambda$. This means that if we advance the path, the eigenvalues $\lambda_{i_{1}}(t), \lambda_{i_{2}}(t),...,\lambda_{i_{q_1}}(t)$ would decrease, and so the eigenvalues  $\mu_{i_1}^0(t), \mu_{i_2}^0 (t),...\mu_{i_{q_1}}^0(t) $ would also decrease (Proposition \ref{minmax}); this means that we would still have that $\mu_{i_1}^0(t), \mu_{i_2}^0(t) ,...\mu_{i_{q_1}}^0(t)$  are strictly less than $\bar S$, for $t>t_1$. Since the path is continuous and the eigenvalues are discrete, there exists a metric $G_b=g_M+f^2g_b$, for $b\in (t_1,t_1+\epsilon)$, $\epsilon>0$, such that $\bar S\notin Spec\{\Delta_{G_b}\}$.  That is, the Morse index of $G_b$ is at least $q_1$ ($q_1>n_a)$ and  $\bar S\notin Spec\{\Delta_{G_b}\}$.

We have constructed a path of metrics, $\{G_t\}_{t\in[a,b]}$,  that satisfies the conditions of Proposition \ref{Zed}, proving thus the existence of a bifurcation point somewhere in the path. 


For the second part of the Theorem, we note that we may construct a whole submanifold of $\{g_M\}\times \mathcal{M}$  that parametrizes a set of bifurcation points. We achieve this by constructing a tubular neighborhood around a path $\{G_t\}_{t\in[a,b]}$, that satisfies the conditions of Proposition \ref{Zed},  like in the first part of the proof.

As before, given a metric $g_1\in \mathcal{M}$, we let $B=(v_1, v_2,...,v_k)$, and $(B^T)^{-1}=(w_1,w_2,...,w_k)$ be some associated matrices to the lattices $\Gamma$ and $\Gamma^*$, respectively. Without loss of generality, we may assume that the first column, $w_1$, is the smallest column of $(B^T)^{-1}$ (recall that isometries of the basis $B$ generate the same lattice $\Gamma$).  Let  $C=||w_1||^2$, and consider the submanifold  $\mathcal{N}\subset \mathcal{M}$, of metrics $g$ with associated $\Gamma_g^*$, such that the smallest column of $(B_g^t)^{-1}$ satisfies $||(w_1)_g||^2=C$. Consider the submanifold $V$ given by the intersection of $\mathcal{N}$ and a neighborhood of $g_1$. Note that if we construct a path for each $g\in V$, using equation (\ref{eigenlambda1}), they will not intersect, for any $t\geq1$, and, moreover, will form a tubular neighborhood of the path for $g_1$. Arguing as in the first part of the proof, for each metric $g\in V$, its corresponding path, $\{G_t\}_{t\geq1}$, will have at least one bifurcation point.
Hence, since the paths do not overlap, the submanifold $V\subset \mathcal{M}\approx \{g_M\}\times \mathcal{M}$, gives us the desired submanifold that parametrizes a set of bifurcation points.

\end{proof}

We remark, as  in the introduction, that when the scalar curvature of $(M,g)$ is constant and of unit volume, we have $f=1$ (see equation \ref{dozo} or the proof of Theorem 3.1 in \cite{Dobarro}) and thus recover the product case, studied in \cite{Ramirez}.

We finally note that if we are to vary only the metric of the second factor, the approach of bifurcation theory does not work if $\bar S  \in Spec\{L_0\}$ (not even on the product case, $f=1$). Indeed,
for $\lambda_0=0$, we have that the operator
$$L_0= \Delta_M-\frac{k}{f} \nabla_{grad \  f},$$
\noindent does not depend on $(T^k,g_t)$ at all.
Thus,  $\bar S $ would be in  $ Spec\{L_0\}$, for all $t$, regardless of the chosen path $(M\times T^k,g_M+f^2 g_t)$, $t>0$. This in turn implies that the metrics cannot satisfy conditions (1) and (2) of Proposition \ref{Zed}, for any $t>0$, and thus there is no guarantee of the existence of bifurcation points.

\end{document}